 \newcommand{\ROM}[1]{\mathrm{\uppercase\expandafter{\romannumeral#1}}}
  \theoremstyle{definition}
   \numberwithin{equation}{section} \theoremstyle{plain}
 \newtheorem{thm}{Theorem}[section]
 \newtheorem{lem}{Lemma}[section]
 \newtheorem{cor}{Corollary}[section]
 \newtheorem{rem}{Remark}[section]
 \newtheorem{prop}{Proposition}[section]
\newtheorem{ack}{Acknowledgements}   
  \numberwithin{equation}{section}
\title[A sufficient condition for a hypersurface to be isoparametric]{\textbf{A sufficient condition for a hypersurface to be isoparametric}}
\author[Z. Z. Tang]{Zizhou Tang}\address{Chern Institute of Mathematics, Nankai University, Tianjin 300071, P. R. China}
\email{zztang@nankai.edu.cn}
\author[D. Y. Wei]{Dongyi Wei}\address{Beijing International Center for Mathematical Research, Peking University, Beijing, 100871 P. R. China}
\email{ jnwdyi@163.com}
\author[W. J. Yan]{Wenjiao Yan}\address{School of Mathematical Sciences, Laboratory of Mathematics and Complex Systems, Beijing Normal University, Beijing, 100875, P. R. China}
\email{wjyan@bnu.edu.cn}
\thanks {The project is partially supported by the NSFC (No.11331002, 11722101)}
\subjclass[2010]{ 53C12, 53C20, 53C40.}
\keywords{Isoparametric hypersurfaces, scalar curvature, Chern conjecture.}
\begin{document}

\maketitle

\begin{abstract}
Let $M^n$ be a closed Riemannian manifold on which the integral of the scalar curvature is nonnegative.
Suppose $\mathfrak{a}$ is a symmetric $(0,2)$ tensor field whose dual $(1,1)$ tensor $\mathcal{A}$ has $n$ distinct eigenvalues, and $\mathrm{tr}(\mathcal{A}^k)$ are constants for $k=1,\cdots, n-1$.
We show that all the eigenvalues of $\mathcal{A}$ are constants, generalizing a theorem of de Almeida and
Brito \cite{dB90} to higher dimensions.

As a consequence, a closed hypersurface $M^n$ in $S^{n+1}$ is isoparametric if one takes $\mathfrak{a}$ above to be the second fundamental form,
giving affirmative evidence to Chern's conjecture. 
\end{abstract}

\section{\textbf{Introduction}}
The well-known Chern conjecture was originally proposed by S. S. Chern in \cite{Che68} and \cite{CdK70}. After 50 years of extensive research, it is still an unsolved challenging problem.

\noindent
\textbf{Chern's conjecture.} Let $M^n\looparrowright S^{n+1}$ be a closed, minimally immersed hypersurface in the unit sphere
with constant scalar curvature $R_M$ (or equivalently, constant $S$--the squared norm of its second fundamental form).
Then for each $n$, the set of all possible values for $R_M$ is discrete.

Since isoparametric hypersurfaces in spheres--all of their principal curvatures are constant by definition--are the only known examples of closed minimal hypersurfaces in spheres with constant $S$, mathematicians turn to stating Chern's conjecture in the following strong version:

\noindent
\textbf{Chern's conjecture (strong version).} Let $M^n\looparrowright S^{n+1}$ be a closed, minimally immersed hypersurface in the unit sphere with constant scalar curvature.
Then $M^n$ is isoparametric.

The weak version of Chern's conjecture is related with the remarkable pinching result of J. Simons \cite{Sim68}: for a closed minimal immersed hypersurface in the unit sphere whose $S$ is not necessarily constant, if $0\leq S\leq n$, then either $S\equiv 0$ or $S\equiv n$. In the first case, $M^n$ is just the equatorial sphere,
which is the isoparametric hypersurface with one principal curvature.
The second case was characterized by \cite{CdK70} that $M^n$
must be Clifford tori $S^r(\sqrt{\frac{r}{n}})\times S^{n-r}(\sqrt{\frac{n-r}{n}})$ $(0<r<n)$, which are exactly isoparametric hypersurfaces in $S^{n+1}$ with two distinct principal curvatures.

Actually, due to the celebrated result of M\"{u}nzner, for isoparametric hypersurfaces in the unit spheres, the number $g$ of distinct principal curvatures can be only $1, 2, 3, 4$ and $6$. The classification for isoparametric
hypersurfaces is recently completed. More precisely, 
when $g\leq3$, the classification for isoparametric hypersurfaces are accomplished by E. Cartan. Except for the $g=1, 2$ cases mentioned before, he
proved that the isoparametric hypersurfaces with $g=3$ are tubes of constant radius around the minimal Veronese embedding of $\mathbb{F}P^2$ ($\mathbb{F}=\mathbb{R}, \mathbb{C}, \mathbb{H}$ and $\mathbb{O}$) into $S^{3m+1}$ ($m=1, 2, 4$ and $8$). Moreover, E. Cartan \cite{Car40} also constructed an isoparametric hypersurface $M^4$ in $S^5$ with four distinct principal curvatures. When $g=4$, After Abresch \cite{Abr83}, Tang \cite{Tan91} and Fang \cite{Fan99}, all the possible multiplicities of principal curvatures are determined by Stolz \cite{Sto99}. Furthermore, Cecil-Chi-Jensen \cite{CCJ07}, Immervoll \cite{Imm08} and Chi \cite{Chi11, Chi13, Chi16} conquered the classification that they are either of OT-FKM type or homogeneous. When $g=6$, Dorfmeister-Neher \cite{DN85} and Miyaoka \cite{Miy13} classified them to be homogeneous.
In recent years, the isoparametric theory in space forms has been generalized to that in Riemannian manifolds (\cite{GT13}, \cite{QT15}).
There are also some applications of isoparametric theory, see for example, \cite{TY13} and \cite{TY15}.

After a series work of \cite{PT83}, \cite{YC98}, \cite{SY07}, one can obtain the pinching result that if $S>n$ and $S$ is constant, then $S>n+\frac{3n}{7}$. In particular,
 for the case $n=3$, Peng-Terng \cite{PT83} proved a sharp result: if $S>3$ and $S$ is constant, then $S\geq6$. However, for high dimensional cases, it is still an
  open question that if $S>n$ and $S$ is constant, then $S\geq 2n$ ?

In 1993, S. P. Chang finally proved Chern's conjecture in the case $n=3$ by finding out all the values of $S$:
\vspace{2mm}

\noindent
\textbf{Theorem (Chang \cite{Cha93})} \emph{A closed minimal hypersurface $M^3$ immersed in $S^4$ with constant scalar curvature is an isoparametric hypersurface with $g=1, 2$ or $3$.
}

For higher dimensional case, there is no more essentially affirmative answer to Chern's conjecture since then. On the other hand, it is possible to prove a generalized
version of Chern's conjecture for $n=3$, where the hypersurface is not necessarily minimal:

\begin{thm}\emph{(\textbf{de Almeida and Brito \cite{dB90}})} Let $M^3$ be a closed hypersurface in $S^4$ with constant mean curvature and constant nonnegative scalar curvature $R_M$. Then $M^3$ is isoparametric.
\end{thm}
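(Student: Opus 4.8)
The plan is to reduce the statement to the constancy of a single globally defined function and then to extract that constancy from an integral identity governed by the sign of $R_M$. Since $M^3\hookrightarrow S^4$, the Gauss equation expresses the scalar curvature as $R_M=6+(\mathrm{tr}\,A)^2-S$, where $A$ is the shape operator and $S=\mathrm{tr}(A^2)$; thus constancy of the mean curvature and of $R_M$ forces $S$ to be constant as well. Writing the characteristic polynomial of $A$ as $t^3-\sigma_1t^2+\sigma_2t-\sigma_3$, we have $\sigma_1=\mathrm{tr}\,A$ constant and $\sigma_2=\tfrac12(\sigma_1^2-S)$ constant, so the entire problem comes down to proving that $\sigma_3=\det A$ (equivalently $\mathrm{tr}(A^3)$) is constant: once this is established, the principal curvatures are the roots of a cubic with constant coefficients, hence constant, and $M$ is isoparametric.

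First I would carry out the moving-frame analysis on the open set $\Omega\subseteq M$ where $A$ has three distinct eigenvalues $\lambda_1>\lambda_2>\lambda_3$, choosing there a local orthonormal frame $\{e_1,e_2,e_3\}$ of principal directions. In such a frame the Codazzi equations pin down the connection forms: the off-diagonal components give $\omega_{ij}(e_k)(\lambda_i-\lambda_j)$ as a symmetric expression in the relevant indices, while the diagonal components give $e_i\lambda_i$ in terms of these. Next, the Gauss equation yields the sectional curvatures directly as $K_{ij}=1+\lambda_i\lambda_j$, whereas the second structure equation expresses the same $K_{ij}$ through the $\omega_{ij}$ and their derivatives; equating the two produces the key second-order relations among the $\lambda_i$ — an analogue of Cartan's identity, but now carrying additional gradient and Hessian terms because the principal curvatures are not assumed constant. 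Constancy of $S$ also feeds in through the Simons identity $\tfrac12\Delta S=|\nabla A|^2+\langle A,\Delta A\rangle=0$, providing a further relation.

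The heart of the argument is to assemble these relations into a divergence identity. I would search for a globally defined vector field (or $1$-form) $\xi$ on $M$, necessarily built from the symmetric functions $\sigma_i$, their covariant derivatives, and invariants such as $\prod_{i<j}(\lambda_i-\lambda_j)^2$ so that $\xi$ extends smoothly across $M\setminus\Omega$, with the property that $\mathrm{div}\,\xi=-Q-R_M\,P$, where $Q\ge 0$ is a sum of squares in the first derivatives of the principal curvatures and $P\ge 0$ vanishes precisely where $M$ is locally isoparametric. Integrating over the closed manifold $M$ and using $R_M\ge 0$ forces $Q\equiv 0$ (and $R_MP\equiv 0$); in either case $d\lambda_i\equiv 0$ on $\Omega$. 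One then argues that $\Omega$ is dense — otherwise $M\setminus\Omega$ contains an open set on which $g\le 2$, handled by a direct argument on the curvature distributions — and, since $\sigma_3$ is a smooth function on all of $M$ whose differential vanishes on the dense set $\Omega$, concludes that $\sigma_3$ is constant, finishing the proof.

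The main obstacle is two-fold. First, guessing the correct divergence identity: the cancellations that convert the Codazzi/Gauss/Simons relations into $\mathrm{div}\,\xi=-Q-R_M P$ with both $Q$ and $P$ of definite sign are delicate and genuinely dimension-specific, and identifying the right combination of $\nabla\sigma_3$, $\nabla\log\prod_{i<j}(\lambda_i-\lambda_j)^2$, and the Simons terms is where the real work lies. Second, the principal frame is defined only on $\Omega$, so one must verify that $\xi$, $Q$, and $P$ are honest smooth objects on all of $M$ and that the divergence theorem leaves no contribution from $\partial\Omega$; this demands control of the frame and of $\prod_{i<j}(\lambda_i-\lambda_j)$ near the locus where principal curvatures collide.
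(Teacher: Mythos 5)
Your roadmap is the right one --- it is exactly the route of de Almeida and Brito, and the same route this paper takes for its higher-dimensional Theorem \ref{thm}: use the Gauss equation $R_M=6+(\mathrm{tr}A)^2-S$ and Newton's identities to reduce everything to the constancy of $\mathrm{tr}(A^3)$, then kill $d\,\mathrm{tr}(A^3)$ by integrating a divergence identity whose sign is governed by $R_M\ge 0$. The genuine gap is that the proposal stops exactly where the proof begins: you explicitly defer ``guessing the correct divergence identity,'' and that identity, together with the sign lemma that makes it useful, \emph{is} the theorem. Concretely, the missing object is the $2$-form $\psi=\sum_{\sigma}S(\sigma)\,\theta_{i_1}\wedge\omega_{i_2i_3}$ (sum over permutations of $(1,2,3)$, with $(\theta_i)$ an oriented orthonormal coframe diagonalizing the second fundamental form). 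Using the Codazzi relations $\mathfrak{a}_{ijk}=(\lambda_i-\lambda_j)\omega_{ij}(e_k)$ and the structure equations one computes $d\psi=\bigl(R_M-A\bigr)\Omega$ up to sign, where $A$ is a positive constant times $\sum_r L(r)f_r^2$, $f=\mathrm{tr}(\mathcal{A}^3)$, $df=\sum_r f_r\theta_r$, and $L(r)$ is an explicit rational symmetric function of the $\lambda_i$ that must be shown to be strictly negative (Lemma \ref{lem} here; for $n=3$ the denominator is visibly $-\prod_{i<j}(\lambda_i-\lambda_j)^2<0$). Stokes then gives $\int_M R_M=\int_M A\le 0$, forcing $f_r\equiv 0$. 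Note two structural corrections to your ansatz: $R_M$ enters with a \emph{constant} coefficient, not multiplied by a nonnegative factor $P$ vanishing on the isoparametric locus, and the sum of squares that is forced to vanish controls $d\,\mathrm{tr}(A^3)$ directly rather than each $d\lambda_i$ separately (which is all one needs, since the other symmetric functions are already constant).

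The second unresolved point is the degenerate locus. The identity for $d\psi$ is derived only where the eigenvalues are simple, since it divides by $\lambda_i-\lambda_j$; moreover, where eigenvalues collide the admissible coframes form a larger gauge group, so even the global well-definedness and smoothness of $\psi$ --- hence the legitimacy of applying Stokes over all of $M$ --- is not automatic and is the delicate part of the three-dimensional argument. This is precisely what the present paper sidesteps by assuming simple eigenvalues everywhere (hypothesis (1.2) of Theorem \ref{thm}) and treats separately in Proposition \ref{prop}. You correctly flag both difficulties, but flagging them is not the same as resolving them, so as written the proposal is a correct plan rather than a proof.
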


It is remarkable that Chang \cite{Cha93'} improved this result by dropping the nonnegativity of scalar curvature. The previous theorem of de Almeida and Brito is an application of another theorem of theirs with more general setting:
\vspace{2mm}

\begin{thm}\emph{(\textbf{de Almeida and Brito \cite{dB90}})}\label{1.2}
Let $M^3$ be a closed $3$-dimensional Riemannian manifold. Suppose $\mathfrak{a}$ is a smooth symmetric $(0, 2)$ tensor field on $M^3$ and $\mathcal{A}$ is its dual tensor field of type $(1, 1)$.
Suppose in addition
\begin{enumerate}
  \item [(i)] $R_M\geq 0$;
  \item [(ii)] the field $\nabla \mathfrak{a}$ of type $(0, 3)$ is symmetric;
  \item [(iii)] $\mathrm{tr} (\mathcal{A})$, $\mathrm{tr} (\mathcal{A}^2)$ are constants.
\end{enumerate}
Then $\mathrm{tr} (\mathcal{A}^3)$ is a constant.
\end{thm}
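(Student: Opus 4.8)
The plan is to localize to the open set $U\subset M^{3}$ on which $\mathcal{A}$ has three distinct eigenvalues. Since $D:=3\,\mathrm{tr}(\mathcal{A}^{2})-(\mathrm{tr}\,\mathcal{A})^{2}=\sum_{i<j}(\lambda_i-\lambda_j)^{2}$ is a constant by (iii), either $D=0$ — in which case $\mathcal{A}=\tfrac13(\mathrm{tr}\,\mathcal{A})\,\mathrm{Id}$ and there is nothing to prove — or $D>0$, in which case no point has all three eigenvalues equal, $U$ is dense, the eigenvalues $\lambda_1<\lambda_2<\lambda_3$ are smooth on $U$, and it suffices to show $\mathrm{tr}(\mathcal{A}^{3})$ is locally constant on $U$; by continuity it is then constant on $M$. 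So fix a component of $U$, choose a local orthonormal frame $\{e_i\}$ of eigenvectors with connection $1$-forms $\omega^{j}_{i}=-\omega^{i}_{j}$, and write $\mathcal{A}_{ij;k}$ for the components of $\nabla\mathfrak{a}$.

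The first step is to exploit (ii): symmetry of $\mathfrak{a}$ together with (ii) makes $\mathcal{A}_{ij;k}$ totally symmetric in $i,j,k$, and combined with (iii) it gives $\mathrm{div}\,\mathfrak{a}=d(\mathrm{tr}\,\mathfrak{a})=0$, so $\mathcal{A}$ is a divergence-free Codazzi tensor. In the eigenframe, reading off $\mathcal{A}_{ii;j}=\mathcal{A}_{ij;i}$ and $\mathcal{A}_{12;3}=\mathcal{A}_{13;2}=\mathcal{A}_{23;1}$ yields
\[
e_j(\lambda_i)=(\lambda_i-\lambda_j)\,\omega^{j}_{i}(e_i)\quad(i\neq j),\qquad
(\lambda_1-\lambda_2)\omega^{2}_{1}(e_3)=(\lambda_1-\lambda_3)\omega^{3}_{1}(e_2)=(\lambda_2-\lambda_3)\omega^{3}_{2}(e_1).
\]
The first relations together with the constancy of $\sum_i\lambda_i$ and $\sum_i\lambda_i^{2}$ force $\nabla\lambda_i=c_i\,\eta$ for a single $1$-form $\eta$, where $(c_1,c_2,c_3)=(\lambda_2-\lambda_3,\lambda_3-\lambda_1,\lambda_1-\lambda_2)$; hence $\nabla\,\mathrm{tr}(\mathcal{A}^{3})=-3c_1c_2c_3\,\eta$, and since each $c_i\neq0$ on $U$ it is enough to prove $\eta\equiv0$ there. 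One also checks that $\eta$ is closed on $U$ (locally $\eta=(\lambda_2-\lambda_3)^{-1}d\lambda_1$ and $d\lambda_2,d\lambda_3$ are again multiples of $\eta$, so $d\eta=0$). After these reductions every connection coefficient $\omega^{j}_{i}(e_k)$ is an explicit expression in $\lambda_1,\lambda_2,\lambda_3$, the components of $\eta$, and the single remaining function $\nu:=\mathcal{A}_{12;3}$; that is, the Levi--Civita connection of $M$ along $U$ is determined by $(\lambda,\eta,\nu)$.

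Next I would substitute into the structure equations $\Omega^{j}_{i}=d\omega^{j}_{i}+\sum_k\omega^{j}_{k}\wedge\omega^{k}_{i}$ to express the sectional curvatures, hence $R_M=2(K_{12}+K_{13}+K_{23})$, as an explicit function on $U$ of $\lambda$, $\eta$, $\nu$ and $d\nu$ only — the $d\eta$ terms drop because $d\eta=0$, and $d\lambda_i=c_i\eta$. Integrating over $M$ and using the divergence theorem to trade the $d\nu$ terms for undifferentiated ones (this produces only $\eta$–$\nu$ cross terms, since the coefficients depend on $\lambda$ with $d\lambda_i=c_i\eta$) yields an identity $\int_M R_M\,dV=\int_M\mathcal{Q}\,dV$, with $\mathcal{Q}$ a quadratic form in $(\eta,\nu)$ whose coefficients are universal functions of the (constant-determined) eigenvalues. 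If $\mathcal{Q}$ is not by itself sign-definite, one supplements this with the tautological identities $\int_M\Delta\,\mathrm{tr}(\mathcal{A}^{2})=\int_M\Delta\,\mathrm{tr}(\mathcal{A}^{3})=0$, which via the Bochner formula for the divergence-free Codazzi tensor $\mathcal{A}$ give two further linear relations among $\int_M|\nabla\mathcal{A}|^{2}$, the curvature-weighted integrals, and $\int_M R_M$; eliminating the curvature-weighted terms leaves $\int_M R_M\,dV$ equal to a (negative) multiple of a manifestly nonnegative integrand in $(\eta,\nu)$ that degenerates precisely on $\{\eta=0\}$. Combined with (i), this forces $\eta\equiv0$ on $U$, so $\mathrm{tr}(\mathcal{A}^{3})$ is locally constant on $U$ and hence constant on $M$.

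The main obstacle I expect is exactly this curvature/integration step: organizing the computation so that the residual quadratic form — either on its own or after being combined with the Bochner identities — is genuinely of one sign with degeneracy locus precisely $\{\eta=0\}$. This is where $n=3$ enters decisively, since then all curvature of $M$ is carried by the Ricci tensor and only the finitely many structural functions $\eta$ and $\nu$ survive, which is what lets the system close. A secondary technical point is the global validity of the divergence theorem: the vector fields used should be checked to be built tensorially from $\mathfrak{a}$ and $\nabla\mathfrak{a}$, hence to extend smoothly across $M\setminus U$, or else one argues that $M\setminus U$ is negligible and the fields remain bounded near it.
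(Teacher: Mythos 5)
Your outline correctly identifies the skeleton of the argument: pass to an eigenframe, use the Codazzi condition plus (iii) to show that all first--order data of $\mathfrak{a}$ is carried by a single closed $1$-form $\eta$ (with $d\,\mathrm{tr}(\mathcal{A}^3)$ a nowhere-zero multiple of $\eta$ where the eigenvalues are distinct) together with the one extra function $\nu=\mathfrak{a}_{123}$, and then produce an integral identity trading $\int_M R_M$ for a quadratic form in $(\eta,\nu)$. This is exactly the strategy of de Almeida--Brito and of Section 3 of this paper, where the bookkeeping is packaged into the exact $(n-1)$-form $\psi=\sum_\sigma S(\sigma)\,\theta_{i_1}\wedge\cdots\wedge\theta_{i_{n-2}}\wedge\omega_{i_{n-1}i_n}$; computing $d\psi$ one finds that the $\nu^2$-terms cancel \emph{identically} (no Bochner identities are needed), leaving $\int_M (n-2)!\,R_M=\int_M A$ with $A$ quadratic in the components of $df$ alone. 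The decisive point --- that this residual form is negative semidefinite and degenerates exactly on $\{df=0\}$ --- is precisely the content of Lemma \ref{lem} (the inequality $L(r)<0$), and your proposal does not establish it: you explicitly defer it as ``the main obstacle,'' offering a speculative Bochner-identity fallback that is not what closes the argument. Since the whole theorem reduces to this sign, what you have is a reduction to an unproved inequality rather than a proof. (For $n=3$ the inequality is a short explicit computation, but it still has to be carried out.)

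There is a second, structural gap in the reduction to the open set $U$ of distinct eigenvalues. The claim that $D>0$ forces $U$ to be dense is false: a Codazzi tensor with eigenvalues $(\mu,\mu,\nu)$, $\mu\neq\nu$, everywhere --- e.g.\ the shape operator of the minimal Clifford hypersurface $S^1(\sqrt{1/3})\times S^2(\sqrt{2/3})\subset S^4$, which has $R_M=3>0$ --- satisfies all the hypotheses with $U=\emptyset$. Moreover, even when $U$ is dense, ``locally constant on $U$ and continuous on $M$'' does not imply constant on $M$ (Cantor-function phenomenon), and your integral identity is taken over $M$, not over $U$, so you must either show that the forms and vector fields you integrate extend smoothly (or at least remain bounded and closed in the distributional sense) across $M\setminus U$ --- this is exactly the role of the global well-definedness lemma for $\psi$ in \cite{dB90} and Lemma \ref{lem2} here --- or treat the coincidence locus separately (on its interior, (iii) pins the two eigenvalues down pointwise to a finite set, so they are locally constant there). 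These points are repairable, but as written the proposal neither proves the key inequality nor controls the degenerate set.
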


As the main result of this paper, the following result generalizes Theorem \ref{1.2} to higher dimension:

\begin{thm}\label{thm}
Let $M^n$ $(n>3)$ be a closed $n$-dimensional Riemannian manifold on which $\int_{M}R_M\geq 0$.
Suppose that $\mathfrak{a}$ is a smooth symmetric $(0,2)$ tensor field on $M^n$, and $\mathcal{A}$ is its dual tensor field of type $(1, 1)$. If the following conditions are satisfied:
\begin{enumerate}
  \item[(1.1)] $\mathfrak{a}$ is Codazzian;
  \item[(1.2)] $\mathcal{A}$ has $n$ distinct eigenvalues $\lambda_1,\cdots,\lambda_n$ everywhere;
  \item[(1.3)] $\mathrm{tr}(\mathcal{A}^k)$ $(k=1,\cdots, n-1)$ are constants;
\end{enumerate}
then
\begin{itemize}
  \item[(a)] $\mathrm{tr}(\mathcal{A}^n)$ is a constant, i.e., $\lambda_1,\cdots, \lambda_n$ are constants;
  \item[(b)] $\int_{M}R_M\equiv 0$.
\end{itemize}
\end{thm}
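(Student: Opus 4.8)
The plan is to pass to a local orthonormal frame of eigenvectors of $\mathcal{A}$, use the hypotheses to reduce everything to a single scalar function, and then convert the Codazzi equations together with the structure equations into an integral identity for $\int_M R_M$ whose non-divergence part has a definite sign. Because $\mathcal{A}$ has $n$ distinct eigenvalues everywhere, the functions $\lambda_1<\cdots<\lambda_n$ are smooth and every point has a neighbourhood carrying a smooth orthonormal frame $e_1,\dots,e_n$ with $\mathcal{A}e_i=\lambda_i e_i$. By Newton's identities, condition (1.3) is equivalent to the constancy of the elementary symmetric functions $\sigma_1,\dots,\sigma_{n-1}$ of the $\lambda_i$, so the characteristic polynomial of $\mathcal{A}$ equals $Q(t)+(-1)^n f$ for a fixed polynomial $Q$ and a single function $f:=\mathrm{tr}(\mathcal{A}^n)$ (equivalently $\sigma_n$, up to an affine change). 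Differentiating $Q(\lambda_i)+(-1)^n f=0$ and using $Q'(\lambda_i)=\prod_{j\ne i}(\lambda_i-\lambda_j)\ne 0$ gives
\[
 d\lambda_i=a_i\,df,\qquad a_i=\frac{(-1)^{n-1}}{\prod_{j\ne i}(\lambda_i-\lambda_j)},
\]
where each $a_i$ is itself a function of $f$ alone, and the classical partial fraction identities yield $\sum_i\lambda_i^{k-1}a_i=0$ for $1\le k\le n-1$ while $\sum_i\lambda_i^{n-1}a_i=\tfrac{1}{n}$. Thus every $\nabla\lambda_i$ is a multiple of $\nabla f$, and the whole problem is governed by $f$.

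Next I would exploit the Codazzi condition in the eigenframe. Writing $\Gamma^k_{ij}=\langle\nabla_{e_i}e_j,e_k\rangle$, expanding $(\nabla_{e_k}\mathcal{A})e_i=(\nabla_{e_i}\mathcal{A})e_k$ and using metric compatibility $\Gamma^k_{ij}=-\Gamma^j_{ik}$ expresses every connection coefficient with at most two distinct indices through the derivatives $e_k(f)$; for instance $\Gamma^i_{ik}=\dfrac{e_k(\lambda_i)}{\lambda_k-\lambda_i}=\dfrac{a_i}{\lambda_k-\lambda_i}\,e_k(f)$, and likewise for $\Gamma^k_{ii}$. The only coefficients not determined this way are the $\Gamma^m_{ij}$ with $i,j,m$ pairwise distinct; for these the Codazzi equations give the homogeneous relations $(\lambda_i-\lambda_m)\Gamma^m_{ki}=(\lambda_k-\lambda_m)\Gamma^m_{ik}$, which are carried along.

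Feeding this into the structure equations, one writes the scalar curvature $R_M$ (a sum of sectional curvatures) in terms of the $\Gamma^k_{ij}$ and their first derivatives. Substituting the relations above and using $d\lambda_i=a_i\,df$ turns each $e_k(\Gamma)$ into a combination of the Hessian $\nabla^2 f$, of $|\nabla f|^2$, and of first derivatives of the $\Gamma^m_{ij}$. One then isolates the exact divergence part --- the $\nabla^2 f$ terms assemble into the divergence of a globally defined vector field $W$ built from $f$, $\nabla f$ and the $\lambda_i$, and the $e_k(\Gamma^m_{ij})$ terms are absorbed by an integration by parts --- aiming at an identity of the form
\[
 R_M=\mathrm{div}\,W-\Phi,\qquad \Phi=c(f)\,|\nabla f|^2+\!\!\!\sum_{i,j,m\ \mathrm{distinct}}\!\!\!c_{ijm}(f)\,(\Gamma^m_{ij})^2 ,
\]
with $c(f)>0$ and $c_{ijm}(f)\ge 0$; since the $\Gamma$'s appearing here vanish where $\nabla f=0$, both $W$ and $\Phi$ extend continuously and the identity holds on all of $M$. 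Integrating then gives $\int_M R_M=-\int_M\Phi\le 0$, and together with the hypothesis $\int_M R_M\ge 0$ this forces $\int_M R_M=0$, which is statement (b), and $\Phi\equiv 0$; as $c(f)>0$ this yields $\nabla f\equiv 0$, so $f=\mathrm{tr}(\mathcal{A}^n)$ and hence all $\lambda_i$ are constant, which is statement (a).

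The main obstacle is establishing the displayed identity: carrying out the scalar curvature computation in the eigenframe while keeping track of the off-diagonal coefficients $\Gamma^m_{ij}$, and rearranging so that the non-divergence density $\Phi$ is genuinely nonnegative. This is exactly where the hypothesis that all $n$ eigenvalues be distinct enters --- it is what produces the frame and the scalar functions $a_i$ --- and where one must go beyond the lowest order terms of the three-dimensional computation of de Almeida and Brito, letting the combinatorics of the $a_i=\pm\bigl(\prod_{j\ne i}(\lambda_i-\lambda_j)\bigr)^{-1}$ and the relations $\sum_i\lambda_i^{k-1}a_i=0$ do the work. A separate minor point is to check that $\{\nabla f=0\}$ causes no trouble: either it is negligible, or $f$ is already constant, in which case (a) is immediate and (b) follows from the same identity since then $\Phi$ reduces to its $\Gamma$-part (or vanishes) and $\int_M R_M=\int_M \mathrm{div}\,W-\int_M\Phi\le 0$ again gives equality.
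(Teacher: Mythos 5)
Your outline follows the same strategy as the paper: diagonalize $\mathcal{A}$ in a local eigenframe, use the Vandermonde/Lagrange identities to reduce all $d\lambda_i$ to multiples of $df$ with $f=\mathrm{tr}(\mathcal{A}^n)$, and produce an identity $(n-2)!\,R_M\,\Omega = d\psi + (\text{nonpositive density})\,\Omega$ which is then integrated. (The paper's globally defined object is the $(n-1)$-form $\psi=\sum_\sigma S(\sigma)\,\theta_{i_1}\wedge\cdots\wedge\theta_{i_{n-2}}\wedge\omega_{i_{n-1}i_n}$, the analogue of your $\mathrm{div}\,W$.) However, your proposal has a genuine gap exactly at the decisive step: you \emph{assert} that after regrouping, the non-divergence density takes the form $c(f)|\nabla f|^2+\sum c_{ijm}(\Gamma^m_{ij})^2$ with $c(f)>0$ and $c_{ijm}\ge 0$, but you give no argument for either sign. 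In the actual computation the density attached to $\nabla f$ is the diagonal quadratic form $\frac{(n-3)!}{n^2}\sum_{r}L(r)\,(e_r(f))^2$, where
\begin{equation*}
L(r)=\sum_{p\neq q;\ p,q\neq r}\frac{1}{(\lambda_r-\lambda_p)(\lambda_r-\lambda_q)\prod\limits_{k\neq p}(\lambda_k-\lambda_p)\prod\limits_{l\neq q}(\lambda_l-\lambda_q)},
\end{equation*}
and the whole proof hinges on the inequality $L(r)<0$ for any $n$ distinct reals. This is not a routine verification: it is the paper's Lemma 2.1, proved by substituting $b_p=1/(\lambda_p-\lambda_r)$, identifying $\sum_p d_p=\sum_p b_p$ via a polynomial interpolation argument, and then establishing $\sum_p d_p^2>\bigl(\sum_p d_p\bigr)^2$ through an exponential inequality applied at the extremal $b_{p_0}$. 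The authors single this out as the main new ingredient beyond the $n=3$ case of de Almeida--Brito, and without it your sign argument does not close.

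Two smaller points. First, your claim $c_{ijm}\ge 0$ for the off-diagonal coefficients $\Gamma^m_{ij}$ ($i,j,m$ distinct) is also unsubstantiated; in fact, in the paper these terms do not contribute a sign-definite quadratic form but cancel \emph{exactly}, because $\mathfrak{a}_{ijk}$ is fully symmetric (Codazzi) and the rational weights satisfy a partial-fraction identity that makes the symmetrized sum vanish. This cancellation still has to be checked --- if it failed with the wrong sign, the method would break. Second, the global well-definedness of your vector field $W$ (equivalently of $\psi$) across changes of the eigenframe, including orientation and ordering ambiguities, needs an argument (the paper's Lemma 3.2); your remark that $W$ ``extends continuously'' addresses the zero set of $\nabla f$ but not the frame-dependence. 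So the architecture of your proof is correct and matches the paper, but the heart of the matter --- the fundamental inequality $L(r)<0$ --- is missing.
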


In fact, the assumption (1.1) is a sufficient condition for $\nabla \mathfrak{a}$ of the symmetric tensor field $\mathfrak{a}$ to be symmetric.

As is well known, the second fundamental form of a hypersurface in the unit sphere is symmetric and satisfies the assumption $(1.1)$ of Theorem \ref{thm}.
We may replace the assumption $(1.3)$ by constant $k$-th power sum of the principal curvatures. As a consequence, we obtain the following corollary, which gives affirmative evidence to Chern's conjecture
for higher dimensions.

\begin{cor}\label{cor1}
Let $M^n$ $(n>3)$ be a closed hypersurface in the unit sphere $S^{n+1}$. If the following conditions are satisfied:
\begin{enumerate}
  \item[(2.1)] $\int_{M}R_M\geq 0$;
  \item[(2.2)] the principal curvatures $\lambda_1,\cdots,\lambda_n$ are distinct;
  \item[(2.3)] $\sum\limits_{i=1}^n\lambda_i^k$ $(k=1,\cdots, n-1)$ are constants,
\end{enumerate}
then $M^n$ is isoparametric and $R_M\equiv 0$. More precisely, $M^n$ can be only one of the following cases:
\begin{enumerate}
  \item [(a)] Cartan's example of isoparametric hypersurface $M^4$ in $S^5$ with four distinct principal curvatures;
  \item [(b)] the isoparametric hypersurface $M^6$ in $S^7$ with six distinct principal curvatures.
\end{enumerate}
\end{cor}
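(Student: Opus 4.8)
The plan is to apply Theorem~\ref{thm} with $\mathfrak{a}$ taken to be the second fundamental form $h$ of the immersion $M^n\looparrowright S^{n+1}$, conclude that $M^n$ is isoparametric with $R_M\equiv0$, and then read off the two possibilities from M\"unzner's structure theory and the completed classification of isoparametric hypersurfaces in spheres.

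\emph{Checking the hypotheses of Theorem~\ref{thm}.} For a hypersurface in a space form the Codazzi equation reads $(\nabla_X h)(Y,Z)=(\nabla_Y h)(X,Z)$, so $h$ is Codazzian and (1.1) holds. The eigenvalues of the shape operator $\mathcal{A}$ dual to $h$ are exactly the principal curvatures $\lambda_1,\dots,\lambda_n$, so (1.2) is precisely (2.2); since $\mathrm{tr}(\mathcal{A}^k)=\sum_{i=1}^n\lambda_i^k$, condition (1.3) is precisely (2.3); and (2.1) supplies $\int_M R_M\ge0$. Theorem~\ref{thm} therefore applies and gives that $\mathrm{tr}(\mathcal{A}^n)=\sum_i\lambda_i^n$ is constant and that $\int_M R_M=0$.

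\emph{$M^n$ is isoparametric with $R_M\equiv0$.} Now all power sums $p_k=\sum_i\lambda_i^k$, $k=1,\dots,n$, are constant, so by Newton's identities every elementary symmetric function of $\lambda_1,\dots,\lambda_n$ is constant on $M$; hence the characteristic polynomial of $\mathcal{A}$ does not depend on the point. As the $\lambda_i$ are everywhere distinct and depend continuously on the point, each $\lambda_i$ is a globally constant function, i.e. $M^n$ is isoparametric. Moreover, by the Gauss equation $R_M=n(n-1)+(\mathrm{tr}\,\mathcal{A})^2-\mathrm{tr}(\mathcal{A}^2)$, which by (2.3) is constant; combined with $\int_M R_M=0$ and $\mathrm{Vol}(M)>0$ this yields $R_M\equiv0$.

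\emph{Identifying $M^n$.} By M\"unzner's theorem the number $g$ of distinct principal curvatures of an isoparametric hypersurface in a sphere belongs to $\{1,2,3,4,6\}$; since here every principal curvature is simple, $g=n$, and $n>3$ forces $n=g\in\{4,6\}$. If $n=4$, then $g=4$ and M\"unzner's constraint $m_1=m_3$, $m_2=m_4$ with $2(m_1+m_2)=n=4$ gives $m_1=m_2=1$, so $M^4$ is the isoparametric hypersurface of $S^5$ with $g=4$ and all multiplicities one, i.e. Cartan's example (case (a)). If $n=6$, then $g=6$ and, by M\"unzner together with Abresch~\cite{Abr83}, all six multiplicities equal a common $m\in\{1,2\}$; since $6m=n=6$ we get $m=1$, and by the classification recalled in the Introduction (Dorfmeister--Neher~\cite{DN85}, Miyaoka~\cite{Miy13}) the isoparametric hypersurface of $S^7$ with $g=6$ and $m=1$ is the homogeneous one with six distinct principal curvatures (case (b)). The only inputs beyond Theorem~\ref{thm} are M\"unzner's restrictions and the two classification theorems; everything else is a routine consequence of the Gauss--Codazzi equations and Newton's identities, so the essential difficulty is entirely contained in Theorem~\ref{thm}.
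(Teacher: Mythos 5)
Your proposal is correct and follows exactly the route the paper intends (the paper states the corollary as an immediate consequence of Theorem \ref{thm} without writing out the details): verify that the second fundamental form satisfies (1.1)--(1.3), invoke Theorem \ref{thm}, use the Gauss equation to upgrade $\int_M R_M=0$ to $R_M\equiv 0$, and then apply M\"unzner's restriction $g\in\{1,2,3,4,6\}$ with $g=n>3$ together with the classification to land on the two listed examples. The details you supply (Newton's identities, the multiplicity count, the constancy of $R_M$ from (2.3)) are exactly the routine steps the paper leaves implicit.
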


\begin{rem}
Actually, for an isoparametric hypersurface in the unit sphere with simple principal curvatures, the scalar curvature is always equal to zero.
This is an interesting phenomenon, because this isoparametric hypersurface is not necessarily assumed to be minimal, different from the case that some principal curvature
has multiplicity greater than $1$ (\cite{TXY12}).
\end{rem}

For the completeness of Theorem \ref{thm}, we need to deal with the case
when some eigenvalue of $\mathcal{A}$ has multiplicity greater than $1$. In this case, we have easily:
\begin{prop}\label{prop}
Let $M^n$ be a closed $n$-dimensional Riemannian manifold.
Suppose that $\mathfrak{a}$ is a smooth symmetric $(0,2)$ tensor field on $M^n$, and $\mathcal{A}$ is its dual tensor field of type $(1, 1)$. If the following conditions are satisfied:
\begin{enumerate}
  \item[(3.1)] the number $g$ of distinct eigenvalues of $\mathcal{A}$ is a constant and $g<n$;
  \item[(3.2)] $\mathrm{tr}(\mathcal{A}^k)$ $(k=1,\cdots, g)$ are constants;
\end{enumerate}
then the eigenvalues of $\mathcal{A}$ are all constants.
\end{prop}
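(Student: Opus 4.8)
The plan is to reduce the statement to an elementary fact about Vandermonde determinants; no curvature input is needed here. We may assume $M$ is connected, arguing component by component otherwise. Since $\mathfrak{a}$ is symmetric, $\mathcal{A}$ is self-adjoint for the metric and has real eigenvalues; by (3.1) the number $g$ of distinct eigenvalues is a constant, so the $g$ eigenvalue functions, ordered as $\mu_1<\mu_2<\cdots<\mu_g$, remain pairwise distinct throughout $M$. Standard perturbation theory for a smooth field of self-adjoint operators then shows that each $\mu_j$ is a continuous (indeed smooth) function on $M$, and that the associated spectral projections are continuous, so the multiplicities $m_1,\dots,m_g$ are integer-valued and locally constant, hence constant on the connected manifold $M$, with $m_1+\cdots+m_g=n$.

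The next step is the key computation. For every $k\geq 1$ one has
\[
\mathrm{tr}(\mathcal{A}^k)=\sum_{j=1}^{g}m_j\,\mu_j^{\,k},
\]
so (3.2) asserts that the $g$ functions $\sum_{j=1}^{g}m_j\mu_j^{\,k}$, $k=1,\dots,g$, are constant. Consider the smooth map
\[
\Phi\colon\ U:=\{(y_1,\dots,y_g)\in\R^g:\ y_1<\cdots<y_g\}\longrightarrow\R^g,\qquad
\Phi(y)=\Bigl(\sum_{j=1}^{g}m_jy_j,\ \sum_{j=1}^{g}m_jy_j^{\,2},\ \dots,\ \sum_{j=1}^{g}m_jy_j^{\,g}\Bigr).
\]
Its Jacobian matrix is $\bigl(k\,m_j\,y_j^{\,k-1}\bigr)_{1\leq k,\,j\leq g}$; extracting the factor $k$ from the $k$-th row and $m_j$ from the $j$-th column leaves a Vandermonde determinant, so
\[
\det D\Phi(y)=g!\,\Bigl(\prod_{j=1}^{g}m_j\Bigr)\prod_{1\leq j<l\leq g}(y_l-y_j)\neq 0\quad\text{for all }y\in U.
\]
Hence $\Phi$ is a local diffeomorphism. (Equivalently, differentiating the identities $\mathrm{tr}(\mathcal{A}^k)=\mathrm{const}$ along an arbitrary $X\in T_xM$ yields the linear system $\sum_j k\,m_j\mu_j^{\,k-1}\,d\mu_j(X)=0$, $k=1,\dots,g$, whose coefficient matrix is exactly this invertible matrix, forcing $d\mu_j(X)=0$.)

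It remains to conclude. By (3.2) the composite $x\mapsto(\mu_1(x),\dots,\mu_g(x))\mapsto\Phi\bigl(\mu_1(x),\dots,\mu_g(x)\bigr)=\bigl(\mathrm{tr}(\mathcal{A}(x)),\dots,\mathrm{tr}(\mathcal{A}(x)^g)\bigr)$ is a constant map into $\R^g$; since $\Phi$ is locally injective, $x\mapsto(\mu_1(x),\dots,\mu_g(x))$ is locally constant, hence constant because $M$ is connected. Therefore $\mu_1,\dots,\mu_g$, and thus all the eigenvalues of $\mathcal{A}$, are constants.

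I expect the only genuinely delicate point to be the regularity and global well-definedness of the individual eigenvalue functions $\mu_j$: this is exactly what the assumption that $g$ is constant provides, since it rules out collisions among the $\mu_j$, and it is why the conclusion is elementary. I also note where $g<n$ enters: condition (3.2) supplies precisely $g$ scalar constraints, matching the number $g$ of unknown eigenvalue functions, which is what makes the Vandermonde system square. In the setting of Theorem~\ref{thm} one has instead $g=n$ together with only the $n-1$ constraints (1.3), so this shortcut is unavailable and the curvature hypothesis $\int_{M}R_M\geq 0$ genuinely enters.
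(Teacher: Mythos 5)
Your proposal is correct and follows essentially the same route as the paper: establish continuity/smoothness of the eigenvalues and constancy of the multiplicities (the paper solves the Vandermonde system for the $m_i$ and invokes Nomizu, where you use spectral projections), then differentiate the $g$ constant power sums and invert the nonsingular Vandermonde-type coefficient matrix to get $d\mu_j=0$. Your remark on why $g<n$ versus $g=n$ separates this proposition from Theorem \ref{thm} matches the paper's intent.
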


Again, considering $\mathfrak{a}$ as the second fundamental form, we obtain immediately the following corollary:
\begin{cor}\label{cor2}
Let $M^n$ be a closed hypersurface in the unit sphere $S^{n+1}$. If the following conditions are satisfied:
\begin{enumerate}
  \item[(4.1)] the number $g$ of principal curvatures is a constant and $g<n$;
  \item[(4.2)] the $k$-th $(k=1,\cdots,g)$ power sum of principal curvatures are constants, 
\end{enumerate}
then $M^n$ is isoparametric.
\end{cor}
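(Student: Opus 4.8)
The plan is to give a purely local and algebraic argument: with exactly $g$ distinct eigenvalues, hypothesis~(3.2) furnishes exactly $g$ constant power sums, and a square Vandermonde computation then pins the eigenvalues down. This is precisely the step that breaks in the borderline case $g=n$ treated by Theorem~\ref{thm}, where only $n-1$ power sums are available for $n$ eigenvalue functions, forcing the use of the curvature integral and the Codazzi condition. We may and do assume $M$ connected, treating components separately otherwise.

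First I would organize the eigenvalue branches. The dual tensor $\mathcal{A}$ is a smooth field of endomorphisms, self-adjoint with respect to the Riemannian metric, so it has real eigenvalues; since the number $g$ of distinct eigenvalues is constant, the eigenvalues never collide, and standard perturbation theory (e.g.\ via the spectral projections $\frac{1}{2\pi\mi}\oint(z-\mathcal{A})^{-1}\dif z$ along small loops separating the eigenvalues) shows that locally the characteristic polynomial of $\mathcal{A}$ factors as $\prod_{i=1}^{g}(t-\mu_i)^{m_i}$ with $\mu_1<\cdots<\mu_g$ smooth and the multiplicities $m_i$ locally constant, hence constant on $M$, with $\sum_i m_i=n$. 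In these terms condition~(3.2) becomes
\[
\sum_{i=1}^{g} m_i\,\mu_i(x)^k \;=\; c_k \qquad (k=1,\dots,g)
\]
for suitable constants $c_1,\dots,c_g$ and every $x\in M$.

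Next I would apply the inverse function theorem to the power-sum map. Set $\Omega=\{\,x\in\R^g : x_1<\cdots<x_g\,\}$ and define $F\colon\Omega\to\R^g$ by $F(x)_k=\sum_{i=1}^{g}m_i x_i^{k}$. Since $\partial F_k/\partial x_i=k\,m_i\,x_i^{k-1}$, one gets
\[
\det DF(x) \;=\; \Big(\prod_{k=1}^{g}k\Big)\Big(\prod_{i=1}^{g}m_i\Big)\det\!\big(x_i^{\,k-1}\big)_{1\le k,i\le g} \;=\; g!\,(m_1\cdots m_g)\!\!\prod_{1\le i<j\le g}\!\!(x_j-x_i)\neq 0
\]
on $\Omega$, because the $x_i$ are distinct. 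Hence $F$ is a local diffeomorphism, so the fibre $F^{-1}(c_1,\dots,c_g)$ is a discrete subset of $\Omega$. The assignment $x\mapsto(\mu_1(x),\dots,\mu_g(x))$ is a continuous map from the connected manifold $M$ into this discrete fibre, so it is constant; therefore each $\mu_i$, and thus every eigenvalue of $\mathcal{A}$, is constant on $M$. Specializing $\mathfrak{a}$ to the second fundamental form then yields Corollary~\ref{cor2} at once.

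I do not expect a genuine obstacle: the only points needing attention are the smoothness of the branches $\mu_i$ and the constancy of the multiplicities $m_i$ (both immediate once $g$ is constant), together with the reduction to a connected base --- which is exactly why the result is obtained ``easily''.
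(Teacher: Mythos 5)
Your argument is correct and proves the same statement as the paper's Proposition \ref{prop}, of which Corollary \ref{cor2} is the specialization to the shape operator, but the mechanics differ at two points and the comparison is worth recording. For the multiplicities, the paper first notes continuity of the eigenvalues and then solves the linear system (\ref{g<n}) for $m_1,\dots,m_g$, concluding that they are continuous and hence constant; you instead obtain local constancy of the $m_i$ directly from the constancy of $g$ via spectral projections. Your route is slightly more robust: the coefficient matrix $(\lambda_i^k)_{1\le k\le g}$ of (\ref{g<n}) has determinant $\big(\prod_i\lambda_i\big)\prod_{i<j}(\lambda_j-\lambda_i)$ and is singular wherever some $\lambda_i=0$, so the paper's step really needs the supplementary equation $\sum_i m_i=n$ (the case $k=0$) to be airtight, whereas your Jacobian $g!\,(m_1\cdots m_g)\prod_{i<j}(x_j-x_i)$ is nonzero on all of $\Omega$. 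For the rigidity of the eigenvalues, the paper invokes Nomizu's theorem for smoothness and differentiates (\ref{g<n}), killing $d\lambda_i$ by the nonvanishing of $\big(k\lambda_i^{k-1}\big)_{k,i}$; your inverse-function-theorem/discrete-fibre argument is the integrated form of the very same Vandermonde computation, with the small advantage that only continuity of the branches $\mu_i$ is used in the end, so the appeal to Nomizu (or to smoothness of the spectral projections) is dispensable in your write-up. Both proofs, correctly, make no use of the Codazzi condition or of any curvature hypothesis; as you observe, those enter only in the borderline case $g=n$ of Theorem \ref{thm}, where one has one fewer conserved power sum than unknowns.
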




\section{A fundamental inequality}
As a preliminary preparation for the proof of Theorem \ref{thm}, we establish in this section a fundamental inequality, which we cannot find yet in any literature and made much effort to prove it.
Frankly speaking, it plays an absolutely important role in the proof of Theorem \ref{thm} in the next section. We hope this inequality will be useful in other places.

\begin{lem}\label{lem}
Let $\lambda_1,\cdots,\lambda_n$ be $n$ distinct real numbers. For $r=1,\cdots,n$, define
\begin{equation*}
  L(r):=\sum_{p,q=1;~p\neq q;~ p,q\neq r
}^n \frac{1}{(\lambda_r-\lambda_p)(\lambda_r-\lambda_q)\cdot\prod\limits_{k=1;~k\neq p}^n(\lambda_k-\lambda_p)\cdot\prod\limits_{l=1;~l\neq q}^n(\lambda_l-\lambda_q)}.
\end{equation*}
Then the inequality $L(r)<0$ holds.
\end{lem}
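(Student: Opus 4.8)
My plan is to recognize the double sum $L(r)$ as arising from a partial-fraction / divided-difference identity and to rewrite it as a single sum of squares times a manifestly negative factor. The key observation is that $\prod_{k\neq p}(\lambda_k-\lambda_p) = (-1)^{n-1}P'(\lambda_p)$ where $P(x)=\prod_{j=1}^n(x-\lambda_j)$; hence each denominator factor $\prod_{k\neq p}(\lambda_k-\lambda_p)$ equals $(-1)^{n-1}P'(\lambda_p)$. So up to the global sign $(-1)^{2(n-1)}=1$ we have
\begin{equation*}
L(r)=\sum_{\substack{p,q\neq r\\ p\neq q}}\frac{1}{(\lambda_r-\lambda_p)(\lambda_r-\lambda_q)P'(\lambda_p)P'(\lambda_q)}.
\end{equation*}
The plan is then to introduce, for a fixed index $r$, the auxiliary quantity $c_p:=\dfrac{1}{(\lambda_r-\lambda_p)P'(\lambda_p)}$ for $p\neq r$, so that the ``off-diagonal'' sum becomes $\big(\sum_{p\neq r}c_p\big)^2-\sum_{p\neq r}c_p^2$. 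Since $\sum_{p\neq r}c_p^2>0$ trivially, the whole game reduces to showing that $\sum_{p\neq r}c_p=0$, which would give $L(r)=-\sum_{p\neq r}c_p^2<0$ at once.

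To prove $\sum_{p\neq r}c_p=0$ I would use the standard Lagrange-interpolation residue identity: for any polynomial $Q$ of degree $\le n-2$, $\sum_{j=1}^n \dfrac{Q(\lambda_j)}{P'(\lambda_j)}=0$, and more generally the sum of residues of $\dfrac{Q(x)}{P(x)}$ over all poles vanishes when $\deg Q\le n-2$. Here the natural rational function to consider is $\dfrac{1}{(x-\lambda_r)P(x)}$ restricted appropriately: the residue at $\lambda_p$ (for $p\neq r$) is exactly $c_p=\dfrac{1}{(\lambda_r-\lambda_p)P'(\lambda_p)}$, while the residue at $\lambda_r$ is $\dfrac{1}{P'(\lambda_r)}$. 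Since $\deg(1)=0\le n-2$ (using $n\ge 3$, which is the relevant range — the cases $n=1,2$ being vacuous or trivial since then there are no admissible pairs $p\neq q$), the sum of all residues is zero, giving $\sum_{p\neq r}c_p = -\dfrac{1}{P'(\lambda_r)}$. This is not quite zero, so the naive split needs one correction term: writing $s:=\sum_{p\neq r}c_p=-1/P'(\lambda_r)$ and $t:=\sum_{p\neq r}c_p^2$, we get $L(r)=s^2-t$, and I must show $s^2<t$, i.e. $\big(\sum_{p\neq r}c_p\big)^2<\sum_{p\neq r}c_p^2$. By Cauchy–Schwarz the reverse inequality $s^2\le (n-1)t$ always holds, so I cannot conclude from convexity alone; instead I expect to need a second residue identity (taking $Q(x)=x$, still of degree $\le n-2$ when $n\ge 3$) to pin down $\sum_{p\neq r}\lambda_p c_p$ and thereby extract enough algebraic relations among the $c_p$ to force the strict inequality — concretely, I anticipate showing that the ``extra'' pole at $\lambda_r$ contributes a term that makes $t-s^2$ a sum involving $\sum_{p\neq r}(c_p - s/(n-1))^2$ plus a strictly positive remainder coming from $1/P'(\lambda_r)^2$.

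The main obstacle, then, is the strict inequality $s^2<t$ rather than the mere sign bookkeeping: one must rule out the degenerate Cauchy–Schwarz equality case and, more importantly, control the interaction between the genuine diagonal defect $\sum c_p^2$ and the residue-induced value of $s$. I expect the cleanest route is to avoid the split entirely and instead write $L(r)$ directly as a contour-integral / residue expression, namely as a sum of residues of a product of two copies of $\dfrac{1}{(x-\lambda_r)P(x)}$-type kernels, and then identify $-L(r)$ with the $L^2$-norm (with respect to a suitable positive discrete measure) of an explicitly interpolating function, forcing negativity and strictness simultaneously. If that packaging works, the lemma follows with no case analysis beyond noting $n\ge 3$ is what the theorem uses; should it resist, the fallback is the two-residue-identity computation sketched above, carried out for general $n$ by induction on the degree bookkeeping.
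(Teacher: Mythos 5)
Your opening reduction coincides with the paper's: both write $L(r)=\bigl(\sum_{p\neq r}c_p\bigr)^2-\sum_{p\neq r}c_p^2$ with $c_p=\frac{1}{(\lambda_r-\lambda_p)\prod_{k\neq p}(\lambda_k-\lambda_p)}$, and both correctly identify that everything hinges on the strict inequality $\bigl(\sum_{p\neq r} c_p\bigr)^2<\sum_{p\neq r} c_p^2$. But you do not prove that inequality; you explicitly flag it as ``the main obstacle'' and offer only speculative routes (a contour-integral packaging that would finish ``if it works'', and an induction fallback), neither of which is carried out. That inequality is the entire content of the lemma: it is false for generic real numbers $c_p$ (take them all equal, so that the square of the sum is $(n-1)$ times the sum of squares), so some specific structure of these particular $c_p$ must be exploited, and your sketch never isolates what that structure is. There is also a concrete error en route: $\frac{1}{(x-\lambda_r)P(x)}$ has a double pole at $\lambda_r$, not a simple one, so its residue there is not $1/P'(\lambda_r)$; the correct value of $s=\sum_{p\neq r}c_p$ involves $\sum_{j\neq r}(\lambda_r-\lambda_j)^{-1}$, and the bookkeeping you build on $s=-1/P'(\lambda_r)$ would not survive the correction.

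For comparison, the paper closes the gap as follows. Substituting $b_p=1/(\lambda_p-\lambda_r)$ turns $c_p$ into $d_p\prod_{k\neq r}b_k$ with $d_p=b_p^{n-1}/\prod_{k\neq p,r}(b_p-b_k)$, and a Lagrange-interpolation coefficient comparison (close in spirit to your residue identity, but in the $b$-variables) yields $B:=\sum_{p\neq r}d_p=\sum_{p\neq r}b_p$. The decisive step is then analytic rather than algebraic: choosing $p_0$ with $b_{p_0}$ extremal (maximal if $B\geq0$, minimal if $B\leq0$) and combining the elementary inequalities $b_{p_0}-b_k<b_{p_0}\exp(-b_k/b_{p_0})$ with $B\leq b_{p_0}\exp(B/b_{p_0}-1)$, one multiplies $n-1$ such inequalities to obtain $|d_{p_0}|>|B|$, whence $\sum_{p\neq r} d_p^2\geq d_{p_0}^2>B^2$ and $L(r)<0$. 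Some device of this kind --- singling out one term whose square alone already beats the square of the whole sum --- is exactly what your proposal is missing.
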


\begin{proof}
Fix $r,$  denote $b_p=\dfrac{1}{\lambda_p-\lambda_r}$ for $p\neq r$. Clearly, $b_p ~(p=1,\cdots,n,\; p\neq r)$ are $(n-1)$ distinct non-zero numbers. Furthermore, let
\begin{eqnarray*}
c_p &=& \frac{1}{(\lambda_r-\lambda_p)\prod\limits_{k=1;
k\neq p}^n(\lambda_k-\lambda_p)}\\
    &=& \frac{b_p^2}{\prod\limits_{k=1;
k\neq p,r}^n\left(\dfrac{1}{b_k}-\dfrac{1}{b_p}\right)} \\
    &=& \frac{b_p^n\prod\limits_{k=1;
k\neq p,r}^nb_k}{\prod\limits_{k=1;
k\neq p,r}^n\left({b_p}-{b_k}\right)}\\
    &=&  d_p\prod\limits_{k=1;
k\neq r}^nb_k
\end{eqnarray*}
where $d_p:=\frac{b_p^{n-1}}{\prod\limits_{k=1;
k\neq p,r}^n\left({b_p}-{b_k}\right)}$ for $p\neq r$.
Observe that
\begin{eqnarray}\label{d}
L(r) &=& \sum_{p,q=1;p\neq q;p,q\neq r}^nc_pc_q \nonumber\\
   &=& \left(\sum_{p=1;p\neq r}^nc_p\right)^2-\sum_{p=1;p\neq r}^nc_p^2 \nonumber\\
   &=& \left(\prod\limits_{k=1;k\neq r}^nb_k\right)^2\left(\left(\sum_{p=1;p\neq r}^nd_p\right)^2-\sum_{p=1;p\neq r}^nd_p^2\right).
\end{eqnarray}
Define a polynomial $H(x):=\sum\limits_{q=1; q\neq r}^nd_q\prod\limits_{k=1; k\neq q,r}^n\left(x-{b_k}\right)$.
It is easy to see $$H(b_p)=b_p^{n-1},~~~\forall ~p\neq r.$$ 
Thus $x^{n-1}-H(x)$ is a polynomial of degree $n-1$ with roots $\{b_p~|~1\leq p\leq n,p\neq r\}.$
Noticing that the coefficient of $x^{n-1}$ in $x^{n-1}-H(x)$ is $1,$ we must have $$x^{n-1}-H(x)=\prod\limits_{k=1; k\neq r}^n\left(x-{b_k}\right).$$
Moreover, comparing the coefficient of $x^{n-2}$ on both sides leads to
$$B:=\sum_{p=1; p\neq r}^nd_p=\sum_{p=1; p\neq r}^nb_p.$$

If $B=\sum\limits_{p=1; p\neq r}^nb_p\geq0$, we can find $p_0\in\{1,\cdots,n\}\setminus\{r\}$ such that $b_{p_0}=\max\{b_p~|~1\leq p\leq n,p\neq r\}$.
Clearly, $b_{p_0}>0$. Then for each $k\in\{1,\cdots,n\}\setminus \{ p_0, r\}$, it is clear that $b_k\neq 0$, and
$$b_{p_0}-{b_k}< b_{p_0}exp~(-b_k/b_{p_0}).$$
On the other hand, the following inequality holds:
$$B\leq b_{p_0}exp~(B/b_{p_0}-1)=b_{p_0}exp~(\sum\limits_{k=1; k\neq p_0,r}^nb_k/b_{p_0}).$$
Multiplying both sides of these $(n-1)$ inequalities gives
$$B\prod\limits_{k=1; k\neq p_0,r}^n\left({b_{p_0}}-{b_k}\right)< b_{p_0}exp~(\sum\limits_{k=1; k\neq p_0,r}^nb_k/b_{p_0})\cdot \prod_{k=1; k\neq p_0,r}^n\Big(b_{p_0}exp~ (-b_k/b_{p_0})\Big)=b_{p_0}^{n-1}.$$
Alternatively speaking, $d_{p_0}> B\geq 0.$
Thus$ \sum\limits_{p=1; p\neq r}^nd_p^2\geq d_{p_0}^2> B^2$.

Analogously, if $B\leq 0,$ we choose $b_{p_0}=\min\{b_p~|~1\leq p\leq n,p\neq r\}.$ A similar discussion shows that 
$|d_{p_0}|>|B|\geq 0$, and thus $\sum\limits_{p=1; p\neq r}^nd_p^2\geq d_{p_0}^2> B^2.$

Therefore, the following inequality is always true:
$$\sum_{p=1; p\neq r}^nd_p^2> B^2=\left(\sum_{p=1; p\neq r}^nd_p\right)^2,$$
which implies $L(r)<0$ by (\ref{d}), as required.

\end{proof}



\section{Proof of Theorem \ref{thm}}

In this section, we assume that $M^n$ is connected and oriented. Otherwise, we can discuss on each connected component of $M^n$ or on the double covering of $M^n$.

Firstly, we start by recalling the structure equations.
Locally, we choose an oriented orthonormal frame fields $\{e_i, ~i=1,\cdots,n\}$ on $M^n$. Let $\{\theta_i, ~i=1,\cdots,n\}$ be the dual frame. Then one has the structure equations:
\begin{equation*}\label{str eqn}
\left\{ \begin{array}{ll}
d\theta_i=\sum\limits_{j=1}^n\omega_{ij}\wedge\theta_j\\
d\omega_{ij}=\sum\limits_{k=1}^n\omega_{ik}\wedge\omega_{kj}-R_{ij},
\end{array}\right.
\end{equation*}
where $\omega_{ij}$ is the connection form 
and $R_{ij}=\frac{1}{2}\sum\limits_{k,l=1}^nR_{ijkl}\theta_k\wedge\theta_l$ is the curvature form. One should be careful that
our notations here are different from those in \cite{dB90}.

Let $\mathfrak{a}$ be a smooth symmetric $(0,2)$ tensor, which can be denoted by $\mathfrak{a}=\sum\limits_{i,j=1}^n\mathfrak{a}_{ij}\theta_i\otimes\theta_j$, where $\mathfrak{a}_{ij}=\mathfrak{a}(e_i, e_j)$ is smooth and $\mathfrak{a}_{ij}=\mathfrak{a}_{ji}$. Then the covariant derivative of $\mathfrak{a}$ can be written by
\begin{equation*}
  \nabla \mathfrak{a}=\sum_{i,j,k=1}^n \mathfrak{a}_{ijk}\theta_i\otimes\theta_j\otimes\theta_k,
\end{equation*}
where $\mathfrak{a}_{ijk}=(\nabla \mathfrak{a})(e_i, e_j, e_k):=(\nabla_{e_k} \mathfrak{a})(e_i, e_j)$.
It is easy to see that
\begin{eqnarray*}
\mathfrak{a}_{ijk} &=& (\nabla_{e_k} \mathfrak{a})(e_i, e_j) \\
   &=& e_k\left(\mathfrak{a}(e_i, e_j)\right)-\mathfrak{a}(\nabla_{e_k}e_i, e_j)-\mathfrak{a}(e_i, \nabla_{e_k}e_j) \\
   &=& e_k\left(\mathfrak{a}_{ij}\right)-\mathfrak{a}\left(\sum_{l=1}^n\omega_{il}(e_k)e_l, e_j\right)-\mathfrak{a}\left(e_i, \sum_{l=1}^n\omega_{jl}(e_k)e_l\right) \\
   &=& e_k\left(\mathfrak{a}_{ij}\right)-\sum_{l=1}^n\omega_{il}(e_k)\mathfrak{a}_{lj}-\sum_{l=1}^n\omega_{jl}(e_k)\mathfrak{a}_{il},
\end{eqnarray*}
thus
\begin{equation}\label{2}
\sum_{k=1}^n\mathfrak{a}_{ijk}\theta_k=d\mathfrak{a}_{ij}+\sum_{m=1}^n(\mathfrak{a}_{im}\omega_{mj}+\mathfrak{a}_{mj}\omega_{mi}).
\end{equation}
In addition, according to the assumption that the tensor $\mathfrak{a}$ is Codazzian, that is, $(\nabla_{e_k} \mathfrak{a})(e_i, e_j)=(\nabla_{e_i} \mathfrak{a})(e_k, e_j)$ for any $i,j,k=1,\cdots,n$.
It implies immediately that $\mathfrak{a}_{ijk}$ is symmetric, and so is $\nabla \mathfrak{a}$. 

Next, we choose a proper coordinate system on $M^n$ such that $(U, (\theta_1,\cdots,\theta_n))$ is admissible (\cite{dB90}). Namely, $(U, (\theta_1,\cdots,\theta_n))$ satisfies
\begin{itemize}
  \item $(\theta_1,\cdots,\theta_n)$ is a smooth orthonormal coframe field on an open subset $U$ of $M^n$;
  \item $\theta_1\wedge\cdots\wedge\theta_n$= the volume form on $U$;
  \item $\mathfrak{a}=\sum\limits_{i=1}^n\lambda_i\theta_i\otimes\theta_i$.
\end{itemize}
Evidently, when $(U, (\theta_1,\cdots,\theta_n))$ is admissible, $\mathfrak{a}_{ij}=\lambda_i\delta_{ij}$.

On the other hand, from the assumption that each $\lambda_i$ ($i=1,\cdots,n$) is simple, it follows that $\lambda_i$ ($i=1,\cdots,n$) is smooth on $M^n$.
Thus we can differentiate it and the $1$-form $d\lambda_i$ is also smooth, which can be expressed by the metric form $\theta_k$ as
\begin{equation*}\label{5}
d\lambda_i=\sum\limits_{j=1}^n\lambda_{ij}\theta_j,
\end{equation*}
where $\lambda_{ij}$ are smooth functions on $M^n$.
Besides, express the connection form $\omega_{ij}$ as
\begin{equation*}\label{beta}
\omega_{ij}:=\sum\limits_{k=1}^n\beta_{ijk}\theta_k
\end{equation*}
where $\beta_{ijk}=\omega_{ij}(e_k)$.
Then it follows from equation (\ref{2}) immediately that
\begin{equation*}\label{3}
\sum_{k=1}^n\mathfrak{a}_{iik}\theta_k = d\lambda_i=\sum\limits_{k=1}^n\lambda_{ik}\theta_k, ~~\forall~i=1,\cdots,n
\end{equation*}
\begin{equation*}\label{4}
\sum_{k=1}^n \mathfrak{a}_{ijk}\theta_k = (\lambda_i-\lambda_j)\omega_{ij}=(\lambda_i-\lambda_j)\sum\limits_{k=1}^n\beta_{ijk}\theta_k, ~~\forall~ i\neq j.
\end{equation*}
Equivalently,
\begin{eqnarray}
a_{iik} &=& \lambda_{ik} \label{6} \\
a_{ijk} &=& (\lambda_i-\lambda_j)\beta_{ijk}, ~~\forall~ i\neq j \label{7}
\end{eqnarray}

Now, let us define $f:=\sum\limits_{i=1}^n\lambda_i^n$.
In conjunction with the assumption, we have
\begin{equation}\label{9}
\left\{ \begin{array}{llll}
\lambda_1+\cdots+\lambda_n=c_1\\
\lambda_1^2+\cdots+\lambda_n^2=c_2\\
\qquad\cdots\cdots\\
\lambda_1^{n-1}+\cdots+\lambda_n^{n-1}=c_{n-1}\\
\lambda_1^{n}+\cdots+\lambda_n^{n}=f,
\end{array}\right.
\end{equation}
where $c_1,\cdots,c_{n-1}$ are constants. Differentiating the equations in (\ref{9}) to give for each $j=1,\cdots,n$,
\begin{equation}\label{10}
\begin{pmatrix}
1 & 1 & \cdots & 1\\
\lambda_1 & \lambda_2 &\cdots & \lambda_n\\
\vdots & \vdots &\ddots &\vdots\\
\lambda_1^{n-2} & \lambda_2^{n-2} &\cdots & \lambda_n^{n-2}\\
\lambda_1^{n-1} & \lambda_2^{n-1} &\cdots & \lambda_n^{n-1}
\end{pmatrix}
\begin{pmatrix}
\lambda_{1j}\\
\lambda_{2j}\\
\vdots\\
\lambda_{n-1, j}\\
\lambda_{nj}
\end{pmatrix}
=
\begin{pmatrix}
0\\
0\\
\vdots\\
0\\
f_j/n
\end{pmatrix},
\end{equation}
where $f_j$ is defined as follows:
\begin{equation*}\label{8}
df=\sum_{j=1}^n(\sum_{i=1}^nn\lambda_i^{n-1}\lambda_{ij})\theta_j:=\sum_{j=1}^nf_j\theta_j.
\end{equation*}
Denote the $n\times n$ Vandermonde matrix on the left hand of (\ref{10}) by $D$. It is known that its determinant
\begin{equation*}\label{11}
\gamma:=\det D=\prod_{
k,l=1;~k>l}^n(\lambda_k-\lambda_l)\neq 0.
\end{equation*}
Then it follows from the equations (\ref{10}) that
\begin{eqnarray}\label{12}
\lambda_{ij}&=&(-1)^{i+n}\frac{f_j}{n\gamma}\prod_{k,l=1;~k,l\neq i;~k>l
}^n(\lambda_k-\lambda_l)\nonumber\\
&=&(-1)^{n+1}\frac{f_j}{n}\cdot\frac{1}{\prod\limits_{k=1;~
k\neq i}^n(\lambda_k-\lambda_i)}.
\end{eqnarray}

Following \cite{dB90}, in this admissible chart, we define a $(n-1)$-form $\psi$ as follows, which is the key point in our proof:
\begin{equation*}\label{psi}
\psi=\sum_{\sigma}S(\sigma)\theta_{i_1}\wedge\theta_{i_2}\wedge\cdots\wedge\theta_{i_{n-2}}\wedge\omega_{i_{n-1}i_n},
\end{equation*}
where $\sigma(1,\cdots,n)=(i_1,\cdots,i_n)$ is a permutation and $S(\sigma)$ is the sign of $\sigma$.

\begin{lem}\label{lem2}
The $(n-1)$-form $\psi$ is globally well defined on $M^n$.
\end{lem}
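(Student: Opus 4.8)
The plan is to prove well-definedness locally-to-globally: $\psi$ is written in a particular admissible chart, so it suffices to check that on the overlap of two admissible charts the two local expressions agree; since admissible charts cover $M^n$ (by condition (1.2) applied pointwise, cf. \cite{dB90}), this produces a single globally defined $(n-1)$-form. The heart of the matter is to pin down the residual gauge freedom in the choice of an admissible coframe and to verify that $\psi$ is invariant under it.

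First I would record this freedom. Since $\mathcal{A}$ has $n$ distinct eigenvalues everywhere, the functions $\lambda_1<\cdots<\lambda_n$ are globally defined and smooth, and the eigenspaces form smooth line subbundles $E_1,\dots,E_n$ with $TM^n=E_1\oplus\cdots\oplus E_n$. The third requirement in the definition of admissibility forces $\theta_i$ to be dual to a local unit section of $E_i$, so any two admissible coframes $(\theta_i)$ and $(\theta_i')$ on an overlap satisfy $\theta_i'=\varepsilon_i\theta_i$ with locally constant signs $\varepsilon_i\in\{\pm1\}$. (Without the ordering convention on the $\lambda_i$ one must also allow a permutation $\pi$, $\theta_i'=\varepsilon_i\theta_{\pi(i)}$; the argument below is unchanged after reindexing the sum defining $\psi$, at the cost of an extra factor $S(\pi)$.) Comparing the two normalizations $\theta_1'\wedge\cdots\wedge\theta_n'=\theta_1\wedge\cdots\wedge\theta_n$ (both equal the fixed volume form) yields the constraint $\prod_{i=1}^n\varepsilon_i=1$.

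Next I would compute the effect of such a change on the ingredients of $\psi$. Because the $\varepsilon_i$ are locally constant, $\nabla_X(\varepsilon_i e_i)=\varepsilon_i\nabla_X e_i$, so the Levi-Civita connection forms transform by $\omega_{ij}'=\varepsilon_i\varepsilon_j\,\omega_{ij}$. Hence each monomial in $\psi$ transforms as
\[
\theta_{i_1}'\wedge\cdots\wedge\theta_{i_{n-2}}'\wedge\omega_{i_{n-1}i_n}'
=\varepsilon_{i_1}\cdots\varepsilon_{i_{n-2}}\,\varepsilon_{i_{n-1}}\varepsilon_{i_n}\;
\theta_{i_1}\wedge\cdots\wedge\theta_{i_{n-2}}\wedge\omega_{i_{n-1}i_n}
=\Big(\prod_{k=1}^n\varepsilon_k\Big)\,\theta_{i_1}\wedge\cdots\wedge\theta_{i_{n-2}}\wedge\omega_{i_{n-1}i_n},
\]
since every index $1,\dots,n$ occurs exactly once among $i_1,\dots,i_n$. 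The signs $S(\sigma)$ are untouched, so summing over $\sigma$ gives $\psi'=\big(\prod_k\varepsilon_k\big)\psi=\psi$ on the overlap. Therefore the locally defined forms patch to a global $(n-1)$-form on $M^n$.

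The only points requiring care are: checking that admissibility genuinely determines the coframe up to the signs $\varepsilon_i$ (this is exactly where hypothesis (1.2) enters, guaranteeing one-dimensional eigenspaces and smooth eigenfunctions), and using the orientation-compatibility constraint $\prod_i\varepsilon_i=1$ that makes all sign factors cancel; the rule $\omega_{ij}'=\varepsilon_i\varepsilon_j\omega_{ij}$ is routine once one notes the $\varepsilon_i$ are constant on connected overlaps. I do not expect a real obstacle here — the lemma is a bookkeeping statement whose purpose is to make $\int_{M}d\psi$ meaningful in the next section — but the sign accounting in the last display is the step most prone to error.
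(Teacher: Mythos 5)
Your argument is correct and is exactly the intended one: the paper omits this proof, deferring to the three-dimensional case in \cite{dB90}, and your sign-bookkeeping (admissible coframes on a connected overlap differ by $\theta_i'=\varepsilon_i\theta_i$ with $\prod_i\varepsilon_i=1$ forced by the volume-form normalization, whence $\omega_{ij}'=\varepsilon_i\varepsilon_j\omega_{ij}$ and each summand of $\psi$ acquires the factor $\prod_k\varepsilon_k=1$) is precisely the natural generalization being invoked. The one point worth making explicit is that hypothesis (1.2) together with connectedness of $M^n$ lets you fix a global increasing labeling $\lambda_1<\cdots<\lambda_n$, which disposes of the permutation ambiguity you mention parenthetically.
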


The proof of Lemma \ref{lem2} is essentially a natural generalization of that in \cite{dB90} and is omitted here.
\vspace{2mm}

Now we continue to prove Theorem \ref{thm}. The differential of $\psi$ can be calculated by parts as follows:
\begin{eqnarray*}
  d\psi &=& \sum_{\sigma}S(\sigma)\Big(d(\theta_{i_1}\wedge\theta_{i_2}\wedge\cdots\wedge\theta_{i_{n-2}})\wedge\omega_{i_{n-1}i_n}+ (-1)^n\theta_{i_1}\wedge\theta_{i_2}\wedge\cdots\wedge\theta_{i_{n-2}}\wedge d\omega_{i_{n-1}i_n}\Big)\\
  &:=& \mathbf{I}+\mathbf{II }
\end{eqnarray*}

For convenience, we first calculate $\mathbf{II}$:
\begin{eqnarray*}
\mathbf{II} &:=& \sum_{\sigma}S(\sigma)(-1)^n\theta_{i_1}\wedge\theta_{i_2}\wedge\cdots\wedge\theta_{i_{n-2}}\wedge d\omega_{i_{n-1}i_n} \\
   &=& \sum_{\sigma}S(\sigma)(-1)^n\theta_{i_1}\wedge\theta_{i_2}\wedge\cdots\wedge\theta_{i_{n-2}}\wedge\Big(\sum_{k=1; k\neq i_{n-1},i_n}^n\omega_{i_{n-1}k}\wedge\omega_{ki_n}-R_{i_{n-1}i_n}\Big)\\
   &:=& \mathbf{II_1}-\mathbf{II_2 }
\end{eqnarray*}

Denoting the volume form of $M^n$ by $\Omega:=\theta_1\wedge\cdots\wedge\theta_n$, we obtain
\begin{eqnarray*}
  \mathbf{II_1} &:=& (-1)^n\sum_{\sigma}S(\sigma)\theta_{i_1}\wedge\cdots\wedge\theta_{i_{n-2}}\wedge\Big(\sum_{k=1; k\neq i_{n-1},i_n}^n\omega_{i_{n-1}k}\wedge\omega_{ki_n}\Big) \\
   &=&  (-1)^n\sum_{\sigma}S(\sigma)\theta_{i_1}\wedge\cdots\wedge\theta_{i_{n-2}}\wedge\Big(\sum_{k=1; k\neq i_{n-1},i_n}^n\sum_{p,q=1}^n\beta_{i_{n-1}kp}\theta_p\wedge\beta_{ki_nq}\theta_q\Big)\\
   &=& (-1)^n\sum_{\sigma}S(\sigma)\theta_{i_1}\wedge\cdots\wedge\theta_{i_{n-2}}\wedge\Big(\sum_{k=1; k\neq i_{n-1},i_n}^n\beta_{i_{n-1}ki_{n-1}}\beta_{ki_ni_n}\theta_{i_{n-1}}\wedge\theta_{i_n}\\
   &&+\beta_{i_{n-1}ki_{n}}\beta_{ki_ni_{n-1}}\theta_{i_{n}}\wedge\theta_{i_{n-1}}\Big)\\
   &=& (-1)^n \sum_{\sigma}\sum_{k=1; k\neq i_{n-1},i_n}^n\left(\beta_{i_{n-1}ki_{n-1}}\beta_{ki_ni_n}-\beta_{i_{n-1}ki_{n}}\beta_{ki_ni_{n-1}}\right)\cdot\Omega\\
    &=&  (-1)^n\cdot\Omega\cdot \sum_{\sigma}\sum_{k=1;~
k\neq i_{n-1},i_n}^n\left(\frac{\lambda_{i_{n-1}k}\lambda_{i_nk}}{(\lambda_{i_{n-1}}-\lambda_k)(\lambda_k-\lambda_{i_n})}-\frac{a^2_{ki_{n-1}i_n}}{(\lambda_{i_{n-1}}-\lambda_k)(\lambda_k-\lambda_{i_n})}\right)
   \end{eqnarray*}
where the last equality follows from (\ref{6}), (\ref{7}).

Besides, denote the scalar curvature by $R_M:=\sum\limits_{i,j=1;
i\neq j}^nK_{ij}=\sum\limits_{i,j=1;
i\neq j}^n R_{ij}(e_i, e_j)$, where $K_{ij}$ is the sectional curvature corresponding to $\mathrm{Span}\{e_i, e_j\}$. Then we achieve
\begin{eqnarray*}
\mathbf{II_2} &:=& (-1)^n\sum_{\sigma}S(\sigma)\theta_{i_1}\wedge\theta_{i_2}\wedge\cdots\wedge\theta_{i_{n-2}}\wedge R_{i_{n-1}i_n} \\
   &=& (-1)^n\sum_{\sigma}S(\sigma)\theta_{i_1}\wedge\theta_{i_2}\wedge\cdots\wedge\theta_{i_{n-2}}\wedge \left(\frac{1}{2}\sum_{k,l=1}^nR_{i_{n-1}i_n}(e_k,e_l)\theta_k\wedge\theta_l\right) \\
   &=& (-1)^n\sum_{\sigma}S(\sigma)\theta_{i_1}\wedge\theta_{i_2}\wedge\cdots\wedge\theta_{i_{n-2}}\wedge K_{i_{n-1}i_n}\theta_{i_{n-1}}\wedge\theta_{i_n} \\
   &=& (-1)^n\sum_{\sigma}K_{i_{n-1}i_n}\theta_{1}\wedge\theta_{2}\wedge\cdots\wedge\theta_{n} \\
   &=&  (-1)^n(n-2)!R_M\cdot\Omega
\end{eqnarray*}

Now we turn to calculating $\mathbf{I}$:
\begin{eqnarray*}
  \mathbf{I} &:=& \sum_{\sigma}S(\sigma)d(\theta_{i_1}\wedge\theta_{i_2}\wedge\cdots\wedge\theta_{i_{n-2}})\wedge\omega_{i_{n-1}i_n} \\
   &=& \sum_{\sigma}S(\sigma)\sum_{j=1}^{n-2}(-1)^{j-1}\theta_{i_1}\wedge\cdots\wedge d(\theta_{i_j})\wedge\cdots\wedge\theta_{i_{n-2}}\wedge\omega_{i_{n-1}i_n}\\
   &:=& \sum_{j=1}^{n-2}\mathbf{I_j}
\end{eqnarray*}

Among all the items, we only calculate $\mathbf{I_1}$:
\begin{eqnarray*}
  \mathbf{I_1} &:=& \sum_{\sigma}S(\sigma)d(\theta_{i_1})\wedge\theta_{i_2}\wedge\cdots\wedge\theta_{i_{n-2}}\wedge\omega_{i_{n-1}i_n} \\
   &=& \sum_{\sigma}S(\sigma)\left(\sum_{k=1}^n\omega_{i_1k}\wedge\theta_k\right)\wedge\theta_{i_2}\wedge\cdots\wedge\theta_{i_{n-2}}\wedge\omega_{i_{n-1}i_n} \\
   &=& \sum_{\sigma}S(\sigma)\Big(\omega_{i_1i_{n-1}}\wedge\theta_{i_{n-1}}\wedge\theta_{i_2}\wedge\cdots\wedge\theta_{i_{n-2}}\wedge\omega_{i_{n-1}i_n}\\
   &&
   +\omega_{i_1i_n}\wedge\theta_{i_n}\wedge\theta_{i_2}\wedge\cdots\wedge\theta_{i_{n-2}}\wedge\omega_{i_{n-1}i_n}\Big) \\
   &=&  \sum_{\sigma}S(\sigma)\Big(\sum_{p=1}^n\beta_{i_1i_{n-1}p}~\theta_p\wedge\theta_{i_{n-1}}\wedge\theta_{i_2}\wedge\cdots\wedge\theta_{i_{n-2}}\wedge\sum_{q=1}^n\beta_{i_{n-1}i_nq}~\theta_q\\
   && +\sum_{p=1}^n\beta_{i_1i_{n}p}~\theta_p\wedge\theta_{i_{n}}\wedge\theta_{i_2}\wedge\cdots\wedge\theta_{i_{n-2}}\wedge\sum_{q=1}^n\beta_{i_{n-1}i_nq}~\theta_q\Big)\\
   &=& \sum_{\sigma}S(\sigma)\Big(\beta_{i_1i_{n-1}i_1}~\theta_{i_1}\wedge\theta_{i_{n-1}}\wedge\theta_{i_2}\wedge\cdots\wedge\theta_{i_{n-2}}\wedge\beta_{i_{n-1}i_ni_n}~\theta_{i_n} \\
   && +\beta_{i_1i_{n-1}i_n}~\theta_{i_n}\wedge\theta_{i_{n-1}}\wedge\theta_{i_2}\wedge\cdots\wedge\theta_{i_{n-2}}\wedge\beta_{i_{n-1}i_ni_1}~\theta_{i_1}  \\
   && +\beta_{i_1i_{n}i_1}~\theta_{i_1}\wedge\theta_{i_{n}}\wedge\theta_{i_2}\wedge\cdots\wedge\theta_{i_{n-2}}\wedge\beta_{i_{n-1}i_ni_{n-1}}~\theta_{i_{n-1}} \\
   && +\beta_{i_1i_ni_{n-1}}~\theta_{i_{n-1}}\wedge\theta_{i_{n}}\wedge\theta_{i_2}\wedge\cdots\wedge\theta_{i_{n-2}}\wedge\beta_{i_{n-1}i_ni_1}~\theta_{i_1} \Big) \\
   &=& (-1)^{n}\cdot\sum_{\sigma}\Big(-\beta_{i_1i_{n-1}i_1}\beta_{i_{n-1}i_ni_n}+\beta_{i_1i_{n-1}i_n}\beta_{i_{n-1}i_ni_1}+\beta_{i_1i_{n}i_1}\beta_{i_{n-1}i_ni_{n-1}}\\
   &&-\beta_{i_1i_ni_{n-1}}\beta_{i_{n-1}i_ni_1}\Big)\cdot\Omega
\end{eqnarray*}

Clearly, the other $\mathbf{I_j}$'s also have similar expressions. Thus
\begin{eqnarray*}
 \mathbf{ I} &=& \sum_{j=1}^{n-2}~\mathbf{I_j} \\
   &=& (-1)^{n}\cdot\Omega\cdot \sum_{j=1}^{n-2}\sum_{\sigma}\Big(-\beta_{i_ji_{n-1}i_j}\beta_{i_{n-1}i_ni_n}+\beta_{i_ji_{n-1}i_n}\beta_{i_{n-1}i_ni_j}+\beta_{i_ji_{n}i_j}\beta_{i_{n-1}i_ni_{n-1}}\\
   &&-\beta_{i_ji_ni_{n-1}}\beta_{i_{n-1}i_ni_j}\Big)\\
   &=& (-1)^{n}\cdot\Omega\cdot\sum_{\sigma}\sum_{k=1;~
k\neq i_{n-1},i_n}^n\Big(-\frac{\lambda_{ki_{n-1}}\lambda_{i_ni_{n-1}}}{(\lambda_k-\lambda_{i_{n-1}})(\lambda_{i_{n-1}}-\lambda_{i_n})}+\frac{a^2_{ki_{n-1}i_n}}{(\lambda_k-\lambda_{i_{n-1}})(\lambda_{i_{n-1}}-\lambda_{i_n})}\\
&& +\frac{\lambda_{ki_{n}}\lambda_{i_{n-1}i_n}}{(\lambda_k-\lambda_{i_{n}})(\lambda_{i_{n-1}}-\lambda_{i_n})}-\frac{a^2_{ki_{n-1}i_n}}{(\lambda_k-\lambda_{i_{n}})(\lambda_{i_{n-1}}-\lambda_{i_n})}\Big)\\
&=& 2\cdot(-1)^{n}\cdot\Omega\cdot\sum_{\sigma}\sum_{k=1;~
k\neq i_{n-1},i_n}^n \frac{\lambda_{ki_{n}}\lambda_{i_{n-1}i_n}}{(\lambda_k-\lambda_{i_{n}})(\lambda_{i_{n-1}}-\lambda_{i_n})}
\end{eqnarray*}
where the last equality follows from the fact that
$$\sum_{\sigma}\sum_{k=1;~
k\neq i_{n-1},i_n}^n \frac{a^2_{ki_{n-1}i_n}}{(\lambda_k-\lambda_{i_{n}})(\lambda_{i_{n-1}}-\lambda_{i_n})}=0.$$

Finally, we arrive at
\begin{eqnarray*}
  d\psi &=& \mathbf{I}+\mathbf{II} \\
   &=& \sum_{j=1}^{n-2}~\mathbf{I_j}+\mathbf{II_1}-\mathbf{II_2} \\
   &=& (-1)^{n}\cdot\Omega\cdot\Big(2\sum_{\sigma}\sum_{k=1;~
k\neq i_{n-1},i_n}^n \frac{\lambda_{ki_{n}}\lambda_{i_{n-1}i_n}}{(\lambda_k-\lambda_{i_{n}})(\lambda_{i_{n-1}}-\lambda_{i_n})}\\
&&-\sum_{\sigma}\sum_{k=1;~
k\neq i_{n-1},i_n}^n \frac{\lambda_{i_{n-1}k}\lambda_{i_{n}k}}{(\lambda_k-\lambda_{i_{n-1}})(\lambda_k-\lambda_{i_n})}-(n-2)!R_M\Big)\\
&=& (-1)^{n+1}(n-2)!\cdot\Omega\cdot R_M+(-1)^{n}\cdot\Omega\cdot\sum_{\sigma}\sum_{k=1;~
k\neq i_{n-1},i_n}^n \frac{\lambda_{i_{n-1}k}\lambda_{i_{n}k}}{(\lambda_k-\lambda_{i_{n-1}})(\lambda_k-\lambda_{i_n})}
\end{eqnarray*}

Define
\begin{equation*}
 A := \sum_{\sigma}\sum_{k=1;~
k\neq i_{n-1},i_n}^n \frac{\lambda_{i_{n-1}k}\lambda_{i_{n}k}}{(\lambda_k-\lambda_{i_{n-1}})(\lambda_k-\lambda_{i_n})} .
\end{equation*}
It is easy to see that
\begin{eqnarray}
 A &=& (n-3)! \sum^n_{\mbox{\tiny$\begin{array}{c}p,q,r=1;\\p,q,r ~are ~distinct\end{array}$}} \frac{\lambda_{pr}\lambda_{qr}}{(\lambda_r-\lambda_p)(\lambda_r-\lambda_q)}\nonumber\\
   &=& \frac{(n-3)!}{n^2} \sum^n_{\mbox{\tiny$\begin{array}{c}p,q,r=1;\\p,q,r ~are ~distinct\end{array}$}} \frac{f_r^2}{(\lambda_r-\lambda_p)(\lambda_r-\lambda_p)\cdot\prod\limits_{k=1;~k\neq p}^n(\lambda_k-\lambda_q)\cdot\prod\limits_{l=1;~l\neq q}^n(\lambda_l-\lambda_q)}\nonumber\\
   &=&\frac{(n-3)!}{n^2}\sum_{r=1}^nL(r)f_r^2 \nonumber\\
   &\leq& 0\nonumber,
\end{eqnarray}
where the second equality follows from (\ref{12}) and the inequality follows from Lemma \ref{lem}.

According to Lemma 3.1, $\psi$ is globally well defined. It follows from Stokes formula that
\begin{equation}\label{Stokes}
  \int_{M}d\psi=(-1)^{n+1}\Big(\int_M(n-2)!R_M-\int_MA\cdot\Omega\Big)=0
\end{equation}
Combining with the assumption that $\int_M R_M\geq 0$, (\ref{Stokes}) forces $\int_M A\cdot\Omega \geq 0$. However, by Lemma \ref{lem}, $\int_M A\cdot\Omega \leq 0$.
Therefore, it must hold that $\int_M A\cdot\Omega=0$ and thus $f_r=0$ for any $r=1,\cdots,n$. In other words, $f$ is constant. Hence, each $\lambda_i$ $(i=1,\cdots, n)$ is a constant. Furthermore, $\int_M R_M\equiv 0$.

The proof of Theorem 1.3 is now complete.

\section{Proof of Proposition \ref{prop}}

Denote the distinct eigenvalues of $\mathcal{A}$ by $\lambda_1,\cdots,\lambda_g$ $(1\leq g\leq n-1)$, whose multiplicities are $m_1,\cdots,m_g$, respectively.
Obviously, $\lambda_i$ $(1\leq i\leq g)$ are continuous functions on $M^n$ . From the assumption (3.2), it follows that
\begin{equation}\label{g<n}
\left\{ \begin{array}{llll}
m_1\lambda_1+\cdots+m_g\lambda_g=c_1\\
m_1\lambda_1^2+\cdots+m_g\lambda_g^2=c_2\\
\qquad\cdots\cdots\\
m_1\lambda_1^{g}+\cdots+m_g\lambda_g^{g}=c_{g}
\end{array}\right.
\end{equation}
where $c_1,\cdots,c_{g}$ are constants.
It is clear that the equations (\ref{g<n}) for $m_1,\cdots, m_g$ are solvable, and the multiplicities $m_i$ $(1\leq i\leq g)$ can be expressed by $c_j$ and $\lambda_k$ $(1\leq j, k\leq g)$, thus are continuous, and further constant, as they take values in integers.
In conjunction with a well-known result of Nomizu \cite{Nom73}, it follows that the eigenvalues $\lambda_1,\cdots,\lambda_g$ are smooth functions. Thus we can differentiate the equations in
(\ref{g<n}) to obtain
$d\lambda_i=0$ $(1\leq i\leq g)$, i.e., $\lambda_1,\cdots,\lambda_g$ are constants.

\begin{ack}
The first and third authors are grateful to Professor C.K.Peng for useful discussions.
The paper started when the third author visited Chern Institute of Mathematics, she wants to thank all the hospitality from Chern Institute.
\end{ack}

\end{document}